\theoremstyle{plain}
\newtheorem{thm}{Theorem}[section]
\newtheorem{corollary}[thm]{Corollary}
\theoremstyle{definition}
\newtheorem{remark}[thm]{Remark}
\newtheorem{example}[thm]{Example}
\newcommand{\R}{\mathbb R}
\newcommand{\N}{\mathbb N}
\newcommand{\DD}{\mathbb D}
\newcommand{\C}{\mathbb C}
\newcommand{\T}{\mathbb T}
\newcommand{\oD}{\overline{\mathbb{D}}}
\newcommand{\om}{\Omega}
\newcommand{\vp}{\varphi}
\newcommand{\spn}{{\rm span}}
\title[Mixing Operators with prescribed unimodular eigenvectors]{ {Mixing operators with prescribed  unimodular eigenvalues}}
\author{H.-P. Beise}
\address{Fachbereich Informatik
\newline\indent Hochschule Trier
\newline\indent D-54293 Trier, Germany}
\email{H.Beise@inf.hochschule-trier.de}
\author{L. Frerick}
\address{Fachbereich IV Mathematik
\newline\indent Universit\"at Trier
\newline\indent D-54286 Trier, Germany}
\email{frerick@uni-trier.de} 
\author{J. M\"uller}
\address{Fachbereich IV Mathematik
\newline\indent Universit\"at Trier
\newline\indent D-54286 Trier, Germany}
\email{jmueller@uni-trier.de}
\begin{document}

\begin{abstract}
For arbitrary closed countable subsets $Z$ of the unit circle  examples of topologically mixing operators on Hilbert spaces are given which have a densely spanning set of eigenvectors with unimodular eigenvalues restricted to $Z$. In particular, these operators cannot be ergodic in the Gaussian sense.
\end{abstract}

\keywords{unimodular eigenvalues, Taylor shift}
\subjclass[2010]{47A16 (primary), 30E10, 37A25 (secondary)}

\date{\today}
\maketitle

\section{Introduction and main result}

The dynamical  behaviour  of linear operators acting on Fr\'echet spaces $X$ has been investigated intensively in the last years.  Recommended  introductions are the textbooks \cite{BayMath} and \cite{GEandPeris}, and also the recent article \cite{GMM}. It turns out that the richness of eigenvectors corresponding to  unimodular eigenvalues strongly influences the metric dynamical properties of linear operators. In particular, a linear operator on a Hilbert space admits a Gaussian invariant measure of full support if and only if it has spanning unimodular eigenvectors  and is ergodic  in the Gaussian sense (that is, ergodic with respect to a Gaussian measure of full support) if and only if it has  perfectly spanning  unimodular eigenvectors. For these and corresponding results we refer in particular to \cite{BG1}, \cite{BG2}, \cite{BayMath2}, \cite{Gr} and again to \cite{GMM}. 

Due to recent deep results of Menet (\cite{Menet}) and Grivaux, Matheron and Menet (\cite{GMM}), in the situation of Hilbert spaces $X$ the picture has become quite complete for the case of chaotic operators, that is, for hypercyclic operators having eigenvectors corresponding to roots of unity (i.e. periodic vectors) which span a dense subspace of $X$. Less is known, however, in the case of absence of periodic or almost periodic vectors (cf. \cite[Section 1.3]{GMM}). 
In \cite[Question 3]{Gr10} (see also \cite[Question 7.7]{GMM}) it is asked if a hypercyclic operator with a densely spanning set of eigenvectors corresponding to rationally independent eigenvalues is already ergodic. 

In this paper, we give examples of topologically mixing operators on Hilbert spaces  which  have a densely spanning set of eigenvectors with unimodular eigenvalues restricted to an arbitrary prescribed closed, countable subset $Z$ of the unit circle $\T$. In particular, such operators cannot be ergodic in the Gaussian sense. Choosing $Z$ as a rationally independent set, Question 3 from \cite{Gr10} can be answered in the negative, at least in the weak form that ergodicity  in the Gaussian sense does not always  follow from the existence of a spanning set of eigenvectors corresponding to a rationally independent set of unimodular eigenvalues.

For an open set $\om$ in the extended plane $\C_\infty$  we denote by $H(\om)$ the Fr{\'e}chet space of functions holomorphic in $\om$ and vanishing at $\infty$ endowed with the topology of locally uniform convergence, where, as usual,  via stereographic projection we identify  $\C_\infty$ and the sphere $\mathbb{S}^2$ endowed with the spherical metric.
We consider Bergman spaces on general open sets $\om \subset \C_\infty$: For $0 \le p<\infty$ let $A^p(\om)$ be the space of all functions $f$ holomorphic in $\om$ that satisfy
\[
||f||_p:=||f||_{\om,p}:=\Big(\int_\om |f|^p \, dm_2\Big)^{1/p} < \infty, 
\]
where $m_2$ denotes the spherical measure on $\C_\infty$.  Then $A^0(\om)=H(\om)$ and   for $p \ge 1$ the spaces $(A^p(\om),||\cdot||_{p})$ are Banach spaces. In the case $p=2$, the norm is induced by the inner  product $
(f,g)\mapsto \int_\om f\overline{g}\, dm_2$.

In the sequel we always consider open sets $\om$ with $0 \in \om$ in which case define  
$T=T_{A^p(\om)}: A^p(\Omega) \to A^p(\Omega)$
by
\[
Tf(z):= (f(z)-f(0))/z \quad (z \not=0), \qquad Tf(0):=f'(0).
\] 
 If $f(z)=\sum_{\nu=0}^\infty a_\nu z^\nu$, then 
\[
Tf(z)=\sum_{\nu=0}^\infty a_{\nu+1} z^\nu
\]
for $|z|$ sufficiently small. 
We call $T$ the Taylor (backward) shift on $A^p(\om)$. 
Writing $S_n f(z):=\sum_{\nu=0}^n a_\nu z^\nu$ for the $n$-th partial sum of the Taylor expansion $\sum_{\nu=0}^\infty a_\nu z^\nu$ of $f$ about $0$, by induction it is easily seen that, for $n \in \N_0$, 
\begin{equation}\label{iterates}
T^{n+1}f(z)=(f-S_{n}f(z))/z^{n+1} \quad (z \not=0) \quad \text{and} \quad T^{n+1}f(0)=a_{n+1}.
\end{equation}
 
In \cite{BMM_X} it is shown that for open sets $\om$ with $0\in \om$ the Taylor shift $T$ is topologically mixing on $H(\om)$  if and only if each connected component of $\C_\infty \setminus \om$  meets $\T$. Results concerning topological and metric dynamics of the Taylor shift on Bergman spaces are proved  in \cite{BM_p}
and \cite{MT}.

We write $M^*:=(\C_\infty \setminus M)^{-1}$ for $M \subset \C_\infty$. Then  $\om^*$ is a compact plane set (note that $0 \in \om$) and the spectrum of $T$ is contained in $\om^*$. In the case $1\le p <2$  is easily seen that $f \in A^p(\om)$ is an eigenfunction for $T$  if and only if, for some $\alpha \in \om^*$, the function $f$ is a scalar multiple of $\gamma_\alpha$, where $\gamma_\alpha$ is defined by 
\[
\gamma_\alpha(z)=1/(1-\alpha z)
\]
 for $z \in \om$ (with $0\cdot \infty:=0$). In this case, $\alpha$ is the corresponding eigenvalue and the spectrum as well as the point spectrum both equal $\om^*$. If $p \ge 2$, then the functions $\gamma_\alpha$ still belong to $A^p(\om)$ for all $\alpha$ in the interior of  $\om^*$, but in general not for $\alpha$ belonging to the boundary of $\om^*$. If $\om$ has small spherical measure near a boundary point $1/\alpha$ of $\om$, it may, however, happen that $\gamma_\alpha$ is again an eigenfunction of $T$ (that is, $\gamma_\alpha$ belongs to $A^p(\om)$). For example, $\gamma_1 \in A^2(\om)$ in the case of the crescent-shaped region $\om=\DD \setminus \{z:|z-1/2|\le 1/2\}$ where $\DD$ denotes the open unit disc in $\C$. This opens up the possibility to  place eigenvalues at certain points of the boundary of $\om^\ast$. A corresponding construction leads to our main result. We write $\mathcal{E}(T)$ for the set of unimodular eigenvalues of $T$, which for $T=T_{A^2(\om)}$ equals the set of $\lambda \in \T$ such that $\gamma_\lambda \in A^2(\om)$. 
 
 \begin{thm} \label{main_result}
Let  $Z\subset \T$ be an infinite closed set. Then there is a open set $\om \subset \DD$ so that the Taylor shift $T=T_{A^2(\om)}$ is topologically mixing,  $\mathcal{E}(T) \subset Z$ and $\{\gamma_\lambda: \lambda \in \mathcal{E}(T)\}$ spans a dense subspace of $A^2(\om)$.
\end{thm}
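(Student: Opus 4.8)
The plan is to construct $\om$ as the unit disc $\DD$ with countably many small "teeth" removed—closed discs (or crescent-shaped regions) clustering at every point of $Z$—so that the three requirements are met simultaneously. Write $Z^{-1}=Z$ (since $Z\subset\T$, inversion is just complex conjugation, so $Z^{-1}$ is again a closed subset of $\T$; in any case replace $Z$ by $Z\cup Z^{-1}$, which does not affect the hypotheses). Pick a countable dense subset $\{\lambda_k:k\in\N\}$ of $Z$ and, for each $k$, a sequence of points $1/\lambda_k^{(j)}\to 1/\lambda_k$ approaching from inside $\DD$ along the radius. Around each such point place a tiny closed disc $\overline{D}(c_{k,j},r_{k,j})$, with radii $r_{k,j}\downarrow 0$ fast, and set
\[
\om=\DD\setminus\bigcup_{k,j}\overline{D}(c_{k,j},r_{k,j}).
\]
Since each removed disc touches (or is tangent to) $\T$ and $\C_\infty\setminus\DD$ meets $\T$, every connected component of $\C_\infty\setminus\om$ meets $\T$; by the cited characterization from \cite{BMM_X} (together with the Bergman-space results of \cite{BM_p}, \cite{MT}) the Taylor shift $T=T_{A^2(\om)}$ is topologically mixing. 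This handles the first assertion essentially for free, provided the construction keeps each hole meeting $\T$.

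The delicate point is controlling $\mathcal{E}(T)$. Recall $\mathcal E(T)=\{\lambda\in\T:\gamma_\lambda\in A^2(\om)\}$ and $\gamma_\lambda(z)=1/(1-\lambda z)$ has its only singularity at $1/\lambda\in\T$. For $\lambda\notin Z$ the point $1/\lambda$ has positive distance to all removed discs clustering at points of $Z$; hence near $1/\lambda$ the set $\om$ contains a full spherical neighbourhood intersected with $\DD$, i.e. essentially a half-disc of fixed radius, and then $\int_\om|\gamma_\lambda|^2\,dm_2=\infty$ exactly as in the classical computation that $1/(1-z)\notin A^2(\DD)$. So $\mathcal E(T)\subset Z$. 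Conversely, for the chosen $\lambda_k$ we need $\gamma_{\lambda_k}\in A^2(\om)$: this is where the "small spherical measure near a boundary point" mechanism in the paragraph before the theorem is used. By choosing the teeth so that $\om$ is extremely thin near $1/\lambda_k$—for instance so that $\om\cap D(1/\lambda_k,\rho)$ is contained in a cusp-like region of area $o(\rho^3)$ as $\rho\to 0$, mimicking the crescent example $\DD\setminus\{|z-1/2|\le 1/2\}$ for which $\gamma_1\in A^2$—one forces $\int_{\om\cap D(1/\lambda_k,\rho)}|1-\lambda_k z|^{-2}\,dm_2<\infty$, while away from $1/\lambda_k$ the integrand is bounded. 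Thus $\lambda_k\in\mathcal E(T)$ for all $k$, so $\{\lambda_k\}\subset\mathcal E(T)\subset Z$ and in particular $\mathcal E(T)$ is dense in $Z$.

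Finally, density of $\spn\{\gamma_\lambda:\lambda\in\mathcal E(T)\}$ in $A^2(\om)$ follows by a duality/uniqueness-of-analytic-functions argument: if $g\in A^2(\om)$ is orthogonal to every $\gamma_{\lambda_k}$, then the function $\alpha\mapsto\int_\om g(z)\,\overline{\gamma_\alpha(z)}\,dm_2(z)=\int_\om \overline{g(z)}/(1-\overline\alpha z)\,dm_2(z)$—which is (anti)holomorphic in $\alpha$ on the interior of $\om^*$ and extends continuously, by the integrability just arranged, to the boundary points $\lambda_k$—vanishes on the set $\{\lambda_k\}$. Since $\{\lambda_k\}$ has an accumulation point (it is dense in the infinite set $Z$) on the boundary of a disc-like region, an appropriate boundary uniqueness theorem (the integral is a Cauchy-type transform; one can invoke a Privalov-type or normal-families argument, or simply note the transform is holomorphic across a free boundary arc) forces it to vanish identically on a neighbourhood of a boundary arc, hence everywhere, whence $g=0$ by the standard fact that the span of all Cauchy kernels $\gamma_\alpha$, $\alpha$ interior to $\om^*$, is already dense in $A^2(\om)$. (One may instead observe directly that $g\perp\gamma_\alpha$ for all interior $\alpha$ already gives $g\equiv 0$, and that boundary values of the transform at the $\lambda_k$ are obtained as limits of its interior values; combining these circumvents any deep boundary-uniqueness input.)

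The main obstacle is the simultaneous balancing act in the geometry of $\om$: the teeth must be \emph{numerous and thin enough} near each $\lambda_k$ that $\gamma_{\lambda_k}$ becomes square-integrable (a positive, quantitative area estimate on cusp regions, together with summability over $(k,j)$ of the total removed area and of the perturbations), yet must leave $\om$ \emph{fat enough} near every $\lambda\notin Z$ that $\gamma_\lambda$ stays non-integrable, and must each still meet $\T$ so that topological mixing is preserved. Making these three quantitative requirements compatible—essentially choosing the radii $r_{k,j}$ and the rate of approach of $1/\lambda_k^{(j)}$ to $1/\lambda_k$—is the technical heart of the argument; everything else is either cited or a routine estimate on the Cauchy kernel.
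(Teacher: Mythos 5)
Your overall geometric intuition---remove thin regions of $\DD$ clustering only at (the reflection of) $Z$ so that $\gamma_\lambda$ becomes square-integrable exactly for $\lambda\in Z$---is the same as the paper's, and your argument that $\mathcal E(T)\subset Z$ is essentially right. But there are two genuine gaps. First, the mixing claim is not "for free": the criterion you cite from \cite{BMM_X} (every component of $\C_\infty\setminus\om$ meets $\T$) characterizes mixing of the Taylor shift on the Fr\'echet space $H(\om)$, not on $A^2(\om)$. For the Bergman space one needs the Godefroy--Shapiro criterion, hence dense spans of $\{\gamma_\alpha:|\alpha|<1\}$ and of $\{\gamma_\alpha:|\alpha|>1\}$ separately, and this is itself a nontrivial rational-approximation statement (the paper's Theorem 2.2 / Corollary 2.3) requiring quantitative control of the geometry of $\om$ at the points where the holes touch $\T$.

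Second, and more seriously, the density argument collapses at the boundary-uniqueness step. The set $\{1/\lambda_k\}$ where you know the Cauchy transform $Vg$ vanishes is a countable subset of $\T$, hence of measure zero; no Privalov-type theorem applies, and $Vg$ is \emph{not} holomorphic across any arc containing these points, since they are precisely where the removed teeth accumulate on $\T$. An accumulation point of boundary zeros of a function holomorphic in $\DD$ gives nothing by itself. The paper gets around this in two coupled ways that your proposal lacks: (i) the removed regions are not round discs (which leave quadratic cusps, enough for $\gamma_\lambda\in A^2$ but for which already $\int_\om |1-z|^{-4}\,dm_2=\infty$) but convex hulls of doubly-exponentially flat "cups" $s(t)=\exp(-\exp(1/|t|))$, which is exactly what makes all derivatives $(Vg)^{(k)}$ exist at the contact points with bounds $\mathcal O(k!\,5^{k/2}\log^k k)$, so that the Denjoy--Carleman theorem yields quasi-analyticity and propagates the vanishing of $Vg$ across the contact point into the other components of $(\om^*)^\circ$ (you never address these other components, yet $Vg$ must vanish on all of them before Hedberg's theorem gives $g=0$); and (ii) the domain is not fixed in advance and density hoped for afterwards---it is built \emph{simultaneously} with a sequence of Runge-type approximants $R_j^{(n)}\in\spn\{\gamma_\zeta:\zeta\in Z_n\}$ to the kernels $\gamma_{\alpha,j}$, carving each stage $G_n$ so that both $R_j^{(n)}$ and $\gamma_{1,j}$ have small $L^2$ mass on the part of the shrinking neighbourhoods not yet removed. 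Without this interleaved construction there is no mechanism forcing $\spn\{\gamma_\lambda:\lambda\in\mathcal E(T)\}$ to be dense, and your choice of radii $r_{k,j}$ "small and fast" cannot supply one.
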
   
 
\begin{remark}
 Since Hilbert spaces are of cotype 2, the  main theorem from \cite{BayMath2} implies that, in the situation of Theorem \ref{main_result}, for countable $Z$ the Taylor shift $T$ is not ergodic in the Gaussian sense. So, as already mentioned above,  Question 3 from \cite{Gr10} can be answered in the negative in so far as the existence of a spanning set of eigenvectors corresponding to a rationally independent set of unimodular eigenvalues does not always imply ergodicity in the Gaussian sense. We are left with the open question whether $T$ is ergodic with respect to some measure of full support or (upper) frequently hypercyclic. 

 If  in the situation of Theorem \ref{main_result} the set $Z$ consists of roots of unity, then the Taylor shift is chaotic with unimodular eigenvalues only in $Z$ and not ergodic in the Gaussian sense. So we have an alternative construction for a chaotic operator on a Hilbert space that is not ergodic in the Gaussian sense. The first construction of such an operator on $\ell_2(\N)$ given by Menet (\cite{Menet}) even leads to an operator which is not (upper) frequently hypercyclic. The approach via Taylor shift is, however, quite different and gives more flexibility in prescribing unimodular eigenvectors.  
\end{remark}

\section{Proof of Theorem \ref{main_result}}

As main tool for the proof of Theorem \ref{main_result} we seek results on rational approximation in the mean. In the sequel, we restrict to open sets $\om$ which are bounded in $\C$ or contain the point $\infty$. 

\begin{remark}\label{Hedberg}
Let $\om$ be an open and bounded set in $\C$ and suppose that each point on the boundary of $\om^\ast$ belongs to the boundary of some component of the interior $(\om^\ast)^\circ$ of $\om^\ast$. Then Theorem 4  from \cite{Hed} implies that the span of $\{\gamma_\alpha: \alpha \in (\om^\ast)^\circ\}$ is dense in $A^2(\om)$. If $\om$ is open in the extended plane with $\infty \in \om$, this is also the case (see \cite[Remark 2.6]{MT}).
\end{remark}

As indicated in the introduction,  the function $\gamma_\alpha$ belongs to $A^2(\om)$ if $\alpha$ belongs to the boundary of $\om^\ast$ and $\om$ has small spherical measure near $1/\alpha$. 
For $k \in \N_0$, $z \in \om$ and $\alpha \in \C$ we write 
\[
\gamma_{\alpha,k}(z):=z^k/(1-\alpha z)^{k+1}=z^k\gamma_\alpha^{k+1}(z).
\]  
We show that in the case of sufficiently small measure near $1/\alpha$ all functions $\gamma_{\alpha,k}$ belong to $A^2(\om)$ and that under appropriate conditions they span a dense subspace of $A^2(\om)$. The approach is strongly influenced by the proof of a result on the completeness of polynomials in $A^2(\om)$ (see \cite[Theorem 12.1]{Mer}, cf.\ also \cite[Chapter I, Section 3]{Gai}). 
   
Let $\delta>0$ and let the "cup" $C_\delta$ be defined as the interior of the convex hull of  $\{t+is(t):-\delta \le t\le\delta\}$, where 
\[
s(t):=\exp(-\exp(1/|t|))
\] 
(with $s(0)=0$). 
With that, we say that two components $A,B$  of an open set $U \subset S:=\R +i(-\pi/2, \pi/2)$ are directly bridged at $w\in S$, if $\omega \in \T$  and $\delta > 0$ exist with $w+\omega C_\delta \subset A$ and $w-\omega C_\delta \subset B$.  If $\varphi:S \to \mathbb{S}^2$ is the 
standard parametrisation of $\mathbb{S}^2 \setminus \{\pm (0,0,1)\}$, that is, 
\[
\varphi(t+is)=(\cos(s)\cos(t),\, \cos(s)\sin(t),\, \sin(s)) 
\]
for $t \in \R$ and $-\pi/2 <s<\pi/2$, we say that two components $C,D$ of an open set $V \subset \C_\infty$ with $0, \infty \not\in \partial V$ are directly bridged, if the corresponding inverse images under $\varphi$ in $S$ are bridged at some $w$. In this case $\zeta=\varphi(w)$ is said to be a bridge point for $(C,D)$. We say that $C,D$ are bridged if finitely many components $C_0, C_1, \ldots, C_m$ exist with $C_0=C$, $C_m=D$ and so that $C_j, C_{j-1}$ are directly bridged. If $\mathcal{C}$ denotes the set of components of $V$, then bridging induces an equivalence relation $\sim$ on $\mathcal{C}$.  If a system $\mathcal{D} \subset \mathcal{C}$ is a complete system of representatives for $\sim$, we briefly say that the system is complete for $V$. 

\begin{thm}\label{Approx}
Let $\om$ be an open set in $\C_\infty$ which is bounded in $\C$ or contains $\infty$ and suppose that each point on the boundary of $\om^\ast$ belongs to the boundary of some component of the interior $(\om^\ast)^\circ$ of $\om^\ast$.  Moreover, suppose $\mathcal{D}$ to be complete for $(\om^\ast)^\circ$ and for  $D \in \mathcal{D}$ let $\alpha_D$ be either in $D$ or a bridge point of $(C,D)$ for some $C \in \mathcal{C}$. Then both
$\{\gamma_\alpha: \alpha \in \bigcup_{D \in \mathcal{D}} D\}$ 
and $\{\gamma_{\alpha_D,k}: D \in \mathcal{D}, \, k \in \N_0\}$ have dense span in $A^2(\om)$.
\end{thm}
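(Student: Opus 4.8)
I would argue by duality: take $g\in A^2(\om)$ orthogonal to the proposed spanning set and show $g=0$. The main object is the Cauchy transform
\[
H(\zeta):=\int_\om \frac{\overline{g(z)}}{z-\zeta}\,dm_2(z),
\]
which is holomorphic on $\C_\infty\setminus\overline\om$ (for $\infty\in\om$ one uses the corresponding spherical kernel, or reduces to bounded $\om$ by a M\"obius change of variables). Since $\gamma_\alpha(z)=-\tfrac1\alpha(z-1/\alpha)^{-1}$ on $\om$, one has $(\gamma_\alpha,g)=-\tfrac1\alpha H(1/\alpha)$ whenever $1/\alpha\notin\overline\om$, and, writing $\gamma_{\alpha,k}=\tfrac1{k!}\partial_\alpha^k\gamma_\alpha$, the numbers $(\gamma_{\alpha,k},g)$ ($k\in\N_0$) form an invertible triangular transform of the derivatives $H^{(j)}(1/\alpha)$, $j\le k$. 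Now $((\om^\ast)^\circ)^{-1}$ is exactly the interior of $\C_\infty\setminus\om$, which coincides with $\C_\infty\setminus\overline\om$; so a component $D$ of $(\om^\ast)^\circ$ corresponds to the component $D^{-1}$ of $\C_\infty\setminus\overline\om$. The goal is to deduce that $H\equiv0$ on \emph{all} of $\C_\infty\setminus\overline\om$ --- equivalently $(\gamma_\alpha,g)=0$ for every $\alpha\in(\om^\ast)^\circ$ --- for then Remark \ref{Hedberg} gives $g=0$ at once.

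Two ingredients are needed, both tied to the super-flat function $s(t)=\exp(-\exp(1/|t|))$. First, if near a point $\zeta_0$ the set $\om$ lies (in the strip coordinates supplied by $\varphi$) inside a band $\{X+iY:|X|\le s(|Y|)\}$ --- the situation at a bridge point, where the two cups are contained in the two holes, so that everything near the pinch outside the cups lies in such a band --- then the elementary bound $\int_0^\delta s(t)\,t^{-2(k+1)}\,dt<\infty$ shows $\gamma_{\alpha_D,k}\in A^2(\om)$ for all $k$, making the second assertion meaningful. Second, and this is the crux, one needs a \emph{Levinson-type propagation lemma}: if $C$ and $D$ are directly bridged at $\zeta_0$, $H$ is holomorphic off $\overline\om$ near $\zeta_0$ with $\overline\partial H\in L^2$ supported on the (there super-thin) set $\om$, and either $H\equiv0$ on $D^{-1}$ or $(\gamma_{\zeta_0,k},g)=0$ for all $k$, then $H\equiv0$ on $C^{-1}\cup D^{-1}$. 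Granting this, the hypothesis of the first assertion directly gives $H\equiv0$ on $D^{-1}$ for every $D\in\mathcal D$; that of the second gives the same, by the identity theorem when $\alpha_D\in D$ and by the lemma when $\alpha_D$ is a bridge point. Since the equivalence relation $\sim$ is generated by direct bridging, one then joins an arbitrary component of $(\om^\ast)^\circ$ to one lying in $\mathcal D$ by a finite chain of direct bridges and propagates the vanishing of $H$ step by step along it; hence $H\equiv0$ on all of $\C_\infty\setminus\overline\om$, which completes the argument via Remark \ref{Hedberg}.

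The proof of the propagation lemma is where the real work lies. After localising and transporting $\zeta_0$ to the origin of the strip (a rotation makes the two cups symmetric about the imaginary axis), $\overline\om$ sits near $0$ inside the super-thin band $B_\delta=\{|X|\le s(|Y|),\,|Y|<\delta\}$, with the cups $\mp i C_\delta$ contained in $C^{-1}$ and $D^{-1}$. Splitting off the local density, I would write $H=H_1+H_2$ on a small disc about $0$, where $H_2$ is the Cauchy transform of $\overline g$ times the spherical density times the indicator of $\om$ near $0$, and $H_1=H-H_2$; then $\overline\partial H_1=0$ near $0$, so $H_1$ is genuinely holomorphic on the full disc, while $H_2$ is made quantitatively small, in the weighted sense relevant here, by the smallness of $\int_0^\delta s(t)\,t^{-2}\,dt$-type quantities. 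The decisive point is then the classical Levinson log--log mechanism: the divergence
\[
\int_0^\delta \log\log\frac1{s(t)}\,dt=\int_0^\delta\frac{dt}{t}=+\infty
\]
--- precisely why the cup is built from a double exponential --- forces a holomorphic function on the disc that, on the cup sitting inside $D^{-1}$, is no larger than $H_2$ to vanish identically; hence $H_1\equiv0$, so $H=H_2$ near $0$, and reabsorbing gives $H\equiv0$ near $0$, in particular on the portions of $C^{-1}$ and $D^{-1}$ touching $0$, whence $H\equiv0$ on $C^{-1}\cup D^{-1}$ by the identity theorem. The variant with $(\gamma_{\zeta_0,k},g)=0$ for all $k$ is handled identically, the hypothesis now being read off the Taylor data of $H_1$ at the origin. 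I expect the main obstacle to be making the Levinson estimate quantitative enough to swallow the non-holomorphic correction $H_2$ while staying on the divergent side of the log--log threshold; the boundary-regularity difficulties usually met at the end of such arguments are avoided here, since we invoke Remark \ref{Hedberg} rather than integrating by parts over $\partial\om$.
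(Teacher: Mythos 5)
Your overall skeleton coincides with the paper's: dualize, introduce the Cauchy transform of $g$ (your $H$ is $Vg$ after the substitution $\alpha\mapsto 1/\alpha$), reduce via Remark \ref{Hedberg} to showing that the transform vanishes on every component of $(\om^\ast)^\circ$, and propagate the vanishing along finite chains of direct bridges. Your reading of the geometry is also right: near a bridge point $\om$ sits inside the doubly-exponentially thin band $\{|Y|\le s(|X|)\}$, which gives $\gamma_{\alpha_D,k}\in A^2(\om)$, and the relevant threshold is indeed $\int_0\log\log(1/s)\,dt=\int_0 dt/t=\infty$. Where you diverge from the paper is the mechanism of propagation, and that is where your argument has a genuine gap. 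Your ``Levinson-type propagation lemma'' is exactly the crux of the whole theorem, and you neither state it precisely nor prove it; worse, the input you feed it is doubtful. You need $H_1=-H_2$ to be \emph{super-exponentially small} on the cup inside $D^{-1}$ for a log--log uniqueness statement to bite, but $H_2$ is the Cauchy transform of an $L^2$ density supported on the thin band, and at points of the cup adjacent to the band the Cauchy--Schwarz bound $|H_2(\zeta)|\le\|g\|_2\bigl(\int_{\mathrm{band}}|z-\zeta|^{-2}dm_2\bigr)^{1/2}$ is merely $O(1)$ (or worse), not small: the cup is itself squeezed into a layer of thickness $s(\delta)$ against the band, so there is no room to get far from the support of $\overline\partial H$. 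Knowing only that a function holomorphic on the disc is \emph{bounded} on the cup forces nothing. So the decisive estimate is missing, and you flag this yourself as ``the main obstacle''; as written the proof does not close. The final ``reabsorbing'' step is also left vague, although it can be repaired (once $H_1\equiv 0$, $H_2$ vanishes on the lower cup and extends holomorphically to the unbounded component of the complement of $\overline{\om\cap B}$, which contains both cups).

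For comparison, the paper avoids constructing any auxiliary decomposition. It works with $Vg$ on a short interval $I$ transversal to the band through the bridge point, and uses Cauchy--Schwarz together with the thinness of $\om$ there to prove the derivative bounds $\sup_I|(Vg)^{(k)}|=\mathcal{O}(k!\,5^{k/2}\log^k k)$; by the Denjoy--Carleman theorem $Vg|_I$ then lies in a quasi-analytic class, so the vanishing of all derivatives at the bridge point (which follows from $Vg|_D=0$, or from $g\perp\gamma_{\alpha_D,k}$ for all $k$) forces $Vg=0$ on $I$ and hence on the adjacent component. This is the same double-exponential threshold you identified --- a single exponential would give $M_k\sim k!\sqrt{(2k)!}$ and quasi-analyticity would fail --- but packaged as a Denjoy--Carleman estimate that is elementary to verify, rather than as a Levinson-type theorem for asymptotically holomorphic functions that would still have to be formulated and proved. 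If you want to complete your route, you should either prove your propagation lemma with explicit hypotheses that $H_2$ actually satisfies, or switch to the derivative-estimate argument.
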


\begin{proof}
1. Let $g \in A^2(\om)=A^2(\om)'$. Then the Cauchy transform $Vg:(\om^\ast)^\circ \to \C$  of $g$, defined by 
\[
(Vg)(\alpha)=\int_{\om} \gamma_\alpha (z)\overline{g}(z)\, dm_2 (z) 
\]
for $\alpha \in (\Omega^\ast)^\circ$, is holomorphic with 
\[
(Vg)^{(k)}(\alpha)=k!\int_{\om} \gamma_{\alpha,k} (z)\overline{g}(z)\, dm_2 (z) 
\]
for all $\alpha \in (\Omega^\ast)^\circ$ and $k \in \N_0$. According to the Hahn-Banach theorem and Remark \ref{Hedberg}, it suffices to show that $Vg|_C=0$ for all $C \in \mathcal{C}$ under each of the two conditions stated in the theorem.

The crucial point is that $Vg$ extends at bridge points $\zeta$ to a function which belongs to a quasi-analytic subclass of $C^\infty(I)$ for some line $I$ with $I\setminus \{\zeta\} \subset (\Omega^\ast)^\circ$: Let $\zeta$ be a bridge point for $(C,D)$. In order to reduce notational complexity we assume that $\zeta = 1$ and that $C$ lies in $\DD$  and $D$ outside $\oD$. Considering the fact that $1$ is a bridge point, we may fix  $0<r< 1$ in such a way that the corresponding "cup"-sets $\pm C_r$ satisfy $\varphi^{-1}(\om)\cap \pm C_r=\varnothing$. Let 
\[
I = \varphi(i[-r/2,r/2])
\]
(note that $I$ is a compact interval in $\R$ with $1$ in its interior).
To show that $Vg|_{I \setminus \{1\}}$ extends at $1$ to a $C^\infty$-function on $I$ and  that the extension (again denoted by $Vg$) belongs to a quasi-analytic subclass of $C^\infty(I)$, 
 we estimate the derivatives of $Vg$ on $I\setminus \{1\}$. 
By the Cauchy-Schwarz inequality we have 
\begin{equation}\label{ineq1}
\lvert (Vg)^{(k)}(x)\rvert \leq k! \int_\Omega |\gamma_{x,k}\overline{g}|\, dm_2\le k! \, \lVert g \rVert_2\, \left(\int_{\om} |\gamma_{x,k}|^2 dm_2  \right)^{1/2}
\end{equation}
for $k \in \N_0$ and $1\not=x \in I$.  So it suffices to estimate the latter integrals. We define 
\[
W_{A}(k, x):=\int_{A} |\gamma_{x,k}|^2 dm_2=\int_{A} \frac{|z|^{2k} dm_2(z)}{\lvert 1-xz\rvert ^{2k+2}}
\]
for $k \in \N_0$, $x \in I$ and measurable $A \subset \om$.  With  $Q:=[-r,r]+i[-r,r]$, we have 
\[
\sup_{x \in I}W_{\om\setminus \varphi(Q)}(k,x) =\mathcal{O}(q_1^k)
\]
 for some positive $q_1$.  To estimate $W_{\varphi(Q)\cap \om}$, we observe that the shape of $\om$ in $\varphi(Q)$ allows that with some constant $c>0$ we have $\lvert 1-xz\rvert\geq c \lvert 1-z\rvert$ for all $x\in I$ and all $z\in\om\cap \varphi(Q)$. Thus, for $x \in I$ we obtain (by substituting $u=e^{1/t}$ in the last step) with a positive constant $q_2$
\begin{eqnarray*}
W_{\om\cap \varphi(Q)}(k,x)& \le & q_2^k W_{\om\cap \varphi(Q)}(k,1)\le c \int_{-r}^r \int_{-s(t)}^{s(t)}\frac{\cos s}{\lvert t+is\rvert^{2k+2}}\, ds\, dt \\ \\
& \leq & 4q_2^k \int_{0}^r \frac{s(t)}{t^{2k+2}}\, dt = 4q_2^k \int_{e^{1/r}}^\infty e^{-u}u^{-1}\log^{2k}(u) \, du.
\end{eqnarray*}
For $k$ sufficiently large and $u\ge k^2$ we have $\log^{2k}(u)\le e^{u/2}$. Hence, by splitting up the integral at $k^2$, one can see that 
\[
\int_{e^{1/r}}^\infty e^{-u}u^{-1}\log^{2k}(u) \, du = \mathcal{O}(k^2\log^{2k}(k^2))=\mathcal{O}(5^k\log^{2k}(k)).
\]
Putting together we find that 
\begin{equation}\label{ineq2}
\sup_{x \in I}W_\om(k,x)=\mathcal{O}(q_3^k \log^{2k}(k))
\end{equation}
for some positive constant $q_3$. Since $|\gamma_{x,k}(z)|\le |\gamma_{1,k}(z)|$  for $z \in \vp(Q)$ and $x\in I$, and since there is a constant $c_k$ with $|\gamma_{x,k}(z)|\le c_k$  for $z \not\in \vp(Q)$ and $x \in I$, the $m_2$-integrability of $g$ on $\om$ and of $|\gamma_{1,k}\overline{g}|$ on $\vp(Q)\cap \om$ imply,  by differentiation of parameter integrals, that $Vg$ extends to a function in $C^\infty(I)$ (which we again denote by $Vg$). 
Moreover, combining (\ref{ineq1}) and \eqref{ineq2} we obtain
\[
\sup_{x \in I}\lvert (Vg)^{(k)}(x) \rvert = \mathcal{O}(k! \, \sqrt{q_3}^{k} \log^{k}(k)) .
\]
Hence, the Denjoy-Carleman theorem shows that $Vg$ belongs to a quasi-analytic subclass of $C^\infty(I)$.

2. Suppose that $g \perp \gamma_\alpha$ for all $\alpha \in  \bigcup_{D \in \mathcal{D}} D$, that is $(Vg)|_D=0$ for all  $D \in \mathcal{D}$.  If $\mathcal{D}=\mathcal{C}$ then $Vg=0$.  If  $C \in \mathcal{C}\setminus \mathcal{D}$   
then $C,D$ are bridged for some $D \in \mathcal{D}$. We can assume that $C,D$ are directly bridged with bridge point $\zeta=1$ as in part 1 above. From the assumption $(Vg)|_D=0$ we have $(Vg)^{(k)}(1)=0$ for all $k$, and then also $(Vg)|_C=0$ by quasi-analyticity an the identity theorem for holomorphic functions. 

3. Suppose that $g \perp \{\gamma_{\alpha_D,k}: D \in \mathcal{D},\, k \in \N_0\}$, that is, $\int_\Omega \gamma_{\alpha_D,k}\overline{g}\,dm_2=0$  for all $D \in \mathcal{D}$ and  $k \in \N_0$, and let $C \in \mathcal{C}$. If $C \in \mathcal{D}$ then $\alpha_C \in C$ or $\alpha_C$ is a bridge point of $C,E$ for some $E \in \mathcal{C}$. In both cases, according to part 1 of the proof (with $\alpha_C=1$ without loss of generality) we have $(Vg)|_C=0$.  If $C \not\in \mathcal{D}$, then there is $D \in \mathcal{D}$ so that $C,D$ are bridged. Again, we can assume that $C,D$ are directly bridged with bridge point $\zeta=1$ as above.  If $\alpha_D \in D$, then $(Vg)|_D=0$. Hence $(Vg)^{(k)}(1)=0$  for all $k$ and then $(Vg)|_C=0$. If $\alpha_D$ is a bridge point of $(D,E)$ for some $E \in \mathcal{C}$, then by the same argument as above  we can now conclude $(Vg)|_D=0$.  As in the first case, $(Vg)|_C=0$.   
\end{proof}

\begin{remark} \label{singleD}
If, for some component $D \in \mathcal{C}$, the single set system $\{D\}$ is complete  for $(\om^\ast)^\circ$ and if $\alpha$ is a point in $D$ or a bridge point of $(C,D)$ for some component $C$, then the span of $\{\gamma_{\alpha,k}: k \in \N_0\}$ is dense in $A^2(\om)$. In particular, in case $\alpha=0$ we conclude that the polynomials form a dense set in $A^2(\om)$  (cf.\ Theorem 12.1 in \cite{Mer}, where actually a weaker condition on the sharpness of $\om$ near $1/\alpha$ is proved to be sufficient).   
\end{remark}

From the Godefroy-Shapiro criterion (see e.g.\  \cite[Theorem 3.1]{GEandPeris}) and Theorem \ref{Approx} we obtain

\begin{corollary} \label{mixing}
Let $\om$ be an open set in $\C_\infty$ which is bounded in $\C$ or contains $\infty$ and suppose that each point in the boundary of $\om^\ast$ belongs to the boundary of some component of  $(\om^\ast)^\circ$. If complete systems $\mathcal{D}$ and $\mathcal{D}'$ for $(\om^\ast)^\circ$ exist with $D \subset \DD$ for all $D\in \mathcal{D}$ and $D' \subset \C_\infty\setminus \overline{\DD}$ for all  $D'\in \mathcal{D}'$, then $T_{A^2(\om)}$ is topologically mixing. 
\end{corollary}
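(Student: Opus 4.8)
The plan is to read off Corollary \ref{mixing} from Theorem \ref{Approx} together with the Godefroy--Shapiro criterion, as the sentence preceding the statement already signals. First I would recall the criterion in the form given in \cite[Theorem 3.1]{GEandPeris}: if $T$ is an operator on a separable Banach space $X$ and the linear spans of $\bigcup_{|\lambda|<1}\ker(T-\lambda I)$ and of $\bigcup_{|\lambda|>1}\ker(T-\lambda I)$ are both dense in $X$, then $T$ is topologically mixing. Since $A^2(\om)$ is a separable Hilbert space, it suffices to exhibit, for $T=T_{A^2(\om)}$, a dense span of eigenvectors whose eigenvalues lie in $\DD$ and a dense span of eigenvectors whose eigenvalues have modulus strictly larger than $1$.

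Next I would identify the relevant eigenvectors. As recalled in the introduction, for every $\alpha\in(\om^\ast)^\circ$ the function $\gamma_\alpha$ belongs to $A^2(\om)$ and satisfies $T\gamma_\alpha=\alpha\gamma_\alpha$. I would then note that, because $0\in\om$, the set $\om^\ast$ is a compact subset of $\C$, so every component of $(\om^\ast)^\circ$ is a bounded plane set. Hence for $D\in\mathcal{D}$ the inclusion $D\subset\DD$ means that each $\gamma_\alpha$ with $\alpha\in D$ is an eigenvector for an eigenvalue in $\DD$, while for $D'\in\mathcal{D}'$ the inclusion $D'\subset\C_\infty\setminus\overline{\DD}$, combined with $D'\subset\C$, forces $1<|\alpha|<\infty$ for all $\alpha\in D'$, so each $\gamma_\alpha$ with $\alpha\in D'$ is an eigenvector for an eigenvalue of modulus greater than $1$.

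Then I would apply Theorem \ref{Approx} twice. The standing hypotheses of that theorem (boundedness of $\om$ in $\C$ or $\infty\in\om$, and the flatness condition that each boundary point of $\om^\ast$ lies on the boundary of a component of $(\om^\ast)^\circ$) are exactly what is assumed in the corollary, and $\mathcal{D}$, $\mathcal{D}'$ are by assumption complete systems for $(\om^\ast)^\circ$. Choosing any admissible base points, e.g.\ $\alpha_D\in D$ for $D\in\mathcal{D}$ — only the first conclusion of Theorem \ref{Approx} is needed and it does not depend on this choice — the theorem gives that $\{\gamma_\alpha:\alpha\in\bigcup_{D\in\mathcal{D}}D\}$ has dense span in $A^2(\om)$; applying it in the same way to $\mathcal{D}'$ gives that $\{\gamma_\alpha:\alpha\in\bigcup_{D'\in\mathcal{D}'}D'\}$ has dense span. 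By the previous paragraph these are precisely the two required dense spans of eigenvectors, so the Godefroy--Shapiro criterion yields that $T$ is topologically mixing.

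I do not expect a genuine obstacle here: the corollary is a formal consequence of Theorem \ref{Approx}, and the only points that need a line of justification are the separability of $A^2(\om)$ and the fact that the components $D'$ really consist of eigenvalues of modulus $>1$ (i.e.\ they do not reach $\infty$), which is immediate from the compactness of $\om^\ast$ in $\C$. The substantive work — the Denjoy--Carleman/quasi-analyticity estimate forcing the Cauchy transform to vanish — has already been done inside the proof of Theorem \ref{Approx}.
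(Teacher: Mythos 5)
Your proposal is correct and follows exactly the route the paper intends: the paper gives no written proof beyond the sentence citing the Godefroy--Shapiro criterion and Theorem \ref{Approx}, and your argument --- applying Theorem \ref{Approx} separately to $\mathcal{D}$ and $\mathcal{D}'$ to get dense spans of eigenvectors $\gamma_\alpha$ with $|\alpha|<1$ and $|\alpha|>1$ respectively --- is precisely the intended deduction. The small points you flag (separability, and $\om^\ast$ compact in $\C$ so that the components $D'$ avoid $\infty$) are worth the line you give them, and the choice of the base points $\alpha_D$ is indeed irrelevant for the first conclusion of Theorem \ref{Approx}.
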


\begin{example}\label{ex}
Let  $\om=\C_\infty\setminus (\overline{U}\cup\overline{G})=((1/\overline{U})\cup (1/ \overline{G}))^\ast$, where $G, 1/U\subset\DD$ are domains with $0 \not\in \overline{G}$ and so that $U,G$ (or, equivalently, $1/G, 1/U$) are bridged. In this case, $\mathcal{D}=\{1/U\}$ and $\mathcal{D}'=\{1/G\}$ are complete systems for $(\om^\ast)^\circ$. 
By Corollary \ref{mixing}, the Taylor shift $T_{A^2(\om)}$ is mixing. If $0 \in 1/U$ (e.g.\ for $1/U=\DD$, in which case $\om=\DD \setminus \overline{G}$), the polynomials are dense in $A^2(\om)$ and the Taylor shift on $A^2(\DD)$ is a quasi-factor of the Taylor shift on $A^2(\om)$.
\end{example}
%


\begin{thm} \label{simple poles}
For each infinite closed set $Z\subset \T$ a domain $G \subset \DD$ bridged to $\C_\infty \setminus \overline{\DD}$ exists with  $\overline{G} \cap \T\subset Z$  and so that the span of $\{\gamma_{\zeta}: \zeta \in \overline{G} \cap \T\}$ is dense in $A^2(\DD\setminus \overline{G})$.
\end{thm}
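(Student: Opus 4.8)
The plan is to take $\om:=\DD\setminus\overline G$ where $G$ is the union of fat ``petals'' removed from $\DD$ along a countable subset $\{w_n\}$ of $Z$, joined by a thin arc. The point of the petals is that $\om$ will be only an extremely thin crescent near each $w_n$ — thin enough for the Cauchy transform machinery of Theorem~\ref{Approx} to apply — while pinching back to $\T$ only at the $w_n$, so that $\overline G\cap\T$ stays inside $Z$.

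First I would fix an infinite closed $K\subseteq Z$ and a sequence $(w_n)$ in $K$ with $\overline{\{w_n\}}=K$. For each $n$, remove from $\DD$ a region $P_n$ attached to $\T$ at $w_n$ which fills essentially the whole half-disc there, leaving as $\om\cap B(w_n,\eps_n)$ only a $\T$-hugging crescent of width at most $s(t)=\exp(-\exp(1/t))$ at distance $t$ from $w_n$; here $\eps_n$ is chosen so small that the crescent areas are summable against the squared mutual distances of the $w_n$, so that (i) $0\in\om$ (keep everything in $\{1/2<|z|<1\}$), and (ii) $\overline G\cap\T=\overline{\{w_n\}}=K\subseteq Z$, since the outer boundary of $P_n$ is a curve tangent to $\T$ at $w_n$ and so $\overline{P_n}\cap\T=\{w_n\}$. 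Joining the $P_n$ by a thin arc inside $\{1/2<|z|<1\}$ disjoint from $\T$ yields a domain $G\subseteq\DD$, which one can also arrange to be regular, so that $\om^*=\overline\DD\cup(1/\overline G)$ has exactly the two interior components $\DD$ (inside $\DD$) and $1/G$ (inside $\C_\infty\setminus\overline\DD$).

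The thinness is tailored so that the computations in the proof of Theorem~\ref{Approx} carry over: since near $w_n$ the set $\om$ lies in a crescent of width $\le s(t)$ and the contributions of the other crescents are summable, $\int_\om|z-w_n|^{-2k-2}\,dm_2\le c\int_0^{\eps_n}s(t)\,t^{-2k-2}\,dt+C_{n,k}<\infty$ for every $k\in\N_0$, so all $\gamma_{w_n,k}$ — hence all $\gamma_\zeta$ with $\zeta\in\overline G\cap\T$ — belong to $A^2(\om)$ and the set in the statement makes sense. For mixing: $\DD$ and $1/G$ are directly bridged at $1/w_n$ for each $n$, because on the $\DD$-side the half-disc contains an inward-pointing cup $C_\delta$, and on the $1/G$-side the image of $P_n$ is essentially a full exterior half-disc minus a $\T$-hugging crescent of width $O(s(t))$, leaving room for the oppositely directed cup since $s(t)\to 0$ faster than the parabolic boundary of a half-disc. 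Thus $\{\DD\}$ and $\{1/G\}$ are complete for $(\om^*)^\circ$, the boundary hypothesis of Corollary~\ref{mixing} holds, and $T_{A^2(\om)}$ is topologically mixing; in particular $G$ is bridged to $\C_\infty\setminus\overline\DD$.

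There remains the density of the span of $\{\gamma_\zeta:\zeta\in\overline G\cap\T\}$, which I expect to be the main obstacle. Given $g\in A^2(\om)$ annihilating all these functions, its Cauchy transform $Vg$ is holomorphic on $(\om^*)^\circ=\DD\sqcup(1/G)$ and vanishes on $\overline G\cap\T=K$. By the argument in the proof of Theorem~\ref{Approx}, at each bridge point $1/w_n$ the extreme thinness of $\om$ forces $Vg$ to extend non-tangentially with all derivatives and to lie in a quasi-analytic Denjoy--Carleman class on a transversal interval through $1/w_n$. The issue is that a priori one controls $Vg$ only on the (possibly countable) set $K$, which meets each such transversal in a single point. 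I would exploit that $K$ is infinite, pick an accumulation point $\zeta_0\in K$, and use the thinness of $\om$ accumulating at $\zeta_0$ to pass from the vanishing of $Vg$ at $\zeta_0$ and at the $w_n\to\zeta_0$ to the vanishing of all non-tangential derivatives of $Vg$ at $\zeta_0$ (a smooth function null at points accumulating at an interior point has all derivatives zero there); quasi-analyticity then gives $Vg|_\DD\equiv0$, and propagation through a bridge point, exactly as in Theorem~\ref{Approx}, gives $Vg|_{1/G}\equiv0$, so $g\perp\gamma_\alpha$ for all $\alpha\in(\om^*)^\circ$ and Remark~\ref{Hedberg} yields $g=0$. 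Making this last step precise — quantifying how thin and how densely the crescents near $\zeta_0$ must be, given that $\overline G\cap\T$ is forced to remain inside the possibly nowhere dense set $Z$ — is where the real work lies.
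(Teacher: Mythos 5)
Your geometric setup---thin, exponentially flat cups attached to $\T$ at a sequence of points of $Z$ accumulating at a point, joined into a single domain $G$ bridged to $\C_\infty\setminus\overline\DD$, with the flatness guaranteeing both $\gamma_{w_n,k}\in A^2(\om)$ and topological mixing via Corollary \ref{mixing}---is essentially the paper's construction, and those parts are fine. The gap is exactly where you place it: the density step, and the route you sketch does not work. The quasi-analytic control obtained in the proof of Theorem \ref{Approx} lives on an interval $I$ \emph{transversal} to $\T$ through a single bridge point, whereas your zeros $1/w_n$ of $Vg$ accumulate at $1/\zeta_0$ \emph{tangentially}, along $\T$. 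On the arcs of $\T$ between consecutive tangency points, $\om=\DD\setminus\overline G$ has full area up to the boundary, so $Vg$ admits no smooth (indeed no continuous) extension to any arc of $\T$ containing infinitely many of the $1/w_n$; there is simply no set containing $\{1/w_n\}\cup\{1/\zeta_0\}$ on which $Vg$ is known to be $C^\infty$, so the principle ``a smooth function vanishing on a set with an interior accumulation point has all derivatives zero there'' has nothing to act on. At a single bridge point you know only the one value $(Vg)(1/w_n)=0$ on the transversal interval, not all derivatives: in Theorem \ref{Approx} the derivatives at the bridge point vanish because $Vg$ vanishes \emph{identically on a whole component} on one side, which is precisely what is unavailable here. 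And a holomorphic function on $\DD$ with non-tangential limit $0$ at countably many boundary points need not vanish.

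The paper closes this gap by a different mechanism: the domain is built \emph{adaptively}, interleaved with Runge approximation. At stage $n$ one first chooses, by Runge's theorem, functions $R_j^{(n)}\in\spn\{\gamma_\zeta:\zeta\in Z_n\}$ with $Z_n\subset Z\cap U_{r_n}$ finite, approximating the derivative kernels $\gamma_{1,j}$ ($j\le n-1$) uniformly on $\overline\DD\setminus U_{r_n}$, and only \emph{then} attaches the flat cups at the points of $Z_n$, making them so fat inside $U_{r_{n-1}}$ that the leftover region $U_{r_{n-1}}\setminus G_n$ has such small area that both $R_j^{(n)}$ and $\gamma_{1,j}$ have small $L^2$-norm there. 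Splitting $\|\gamma_{1,j}-R_j^{(n)}\|_{A^2(\om)}$ into the part away from $1$ (controlled by the uniform Runge estimate) and the part near $1$ (controlled by the small area) shows $R_j^{(n)}\to\gamma_{1,j}$ in $A^2(\om)$, so every $\gamma_{1,k}$ lies in the closed span of $\{\gamma_\zeta:\zeta\in\overline G\cap\T\}$; density then follows from Remark \ref{singleD}/Example \ref{ex} applied at the single bridge point $1$, which is exactly the situation Theorem \ref{Approx}'s quasi-analyticity argument does handle. This adaptive Runge step is the missing idea; with a fixed, a priori choice of petals your duality argument does not go through.
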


\begin{proof} 
Let
\[
U_r:=\{z \in \C :|z-1| < r\}\cap\DD.
\]

In our construction we will cut out sets from $\DD$, essentially of the form $\zeta \varphi(C_\delta) $, that are "flat" at points $\zeta \in \T$ in such a way that these $\zeta$ become bridge points to $\C_\infty \setminus \overline{\DD}$. We have to control the integrals of $\gamma_\zeta$ over the remaining areas next to such a bridge point. 
To this end, throughout the proof we fix a $\delta>0$ and a corresponding $r_0>0$ that is sufficiently small such that for all $\zeta\in \T\cap \overline{U_{r_0}}$ the set $ U_{r_0}\setminus \zeta\varphi(C_{\delta})$ consists of the (one or two) component(s) bounded only by $\{\varphi(t+is(t)): t\in [-\delta, \delta]\}$ and $\overline{U_{r_0}}\cap \T$, i.e. for every $z$ in this set we find $t'\in [-\delta, \delta]\setminus\{0\}$ and $\zeta'\in \overline{U_{r_0}}\cap \T$ such that $\varphi(t'+is(t'))$ and $\zeta'$ define a line through the origin and the line segment between those points  contains $z$ and does not intersect with $\varphi(C_{\delta})$.
The definition of $C_{\delta}$ is extended to $C_{\delta,\rho}$, with $\rho>0$, defined as the interior of
\[\text{conv}\{t+i\,ds(t):-\delta \le t\le\delta, \rho \le d\le 1\},\]
with $\rm conv$ denoting the convex hull. Together, this allows us  to decrease for $\zeta\in \T\cap \overline{U_{r_0}}$ the area of $U_r\setminus \zeta\varphi(C_{\delta,\rho})$ to become arbitrarily close to zero by decreasing $\rho$. 

We may suppose that $Z$ has an accumulation at $1$. 
In the initial step of our construction we set  $Z_0:=\varnothing$ and $G_0=\varphi(C_{\delta,1})$.
This implies $\gamma_{1,k}\in  A^2(\DD\setminus \overline{G_0})$ for all $k$. 
For a subsequent $n\in\N$, the iterative procedure goes as follows.  We choose $0<r_n<r_{n-1}$ such that 
\begin{equation}\label{intGammaSmall}
\int_{U_{r_n}\setminus \overline{G_{n-1}}}|\gamma_{1,j}|^2 \, dm_2 <1/n
\end{equation}
for $j=1,...,n$. By a variant of Runge's theorem on approximation by rational functions with first order poles, as it can be found in \cite{luecking2012complex}, we find a finite set
\[Z_n\subset \overline{U_{r_n/2}}\cap \left(Z\setminus (\bigcup_{j=0}^{n-1}Z_j\cup \{1\})\right)\]
and an $R_j=R_{n,j}\in \spn\{\gamma_\zeta: \zeta \in Z_n\}$ such that 
\begin{equation}\label{distRGammaSmall}
\max_{z \in \overline{\DD} \setminus U_{r_n}} |\gamma_{1,j}(z)- R_j(z)|<1/n
\end{equation}
for $j=1,...,n$.
Then we set
\[G_n:=\bigcup_{\zeta\in Z_n}\zeta \varphi(C_{\delta,\rho_n})\]
where $\rho_n>0$ is chosen in a way that the area of $U_{r_n}\setminus \overline{G_n}$ is sufficiently small to yield
\begin{equation}\label{intSmall}
\int_{U_{r_n}\setminus \overline{G_n}}|R_j|^2 \, dm_2 <1/n.
\end{equation}
This implies $\gamma_{\zeta}\in A^2(\DD\setminus \overline{G_n})$ for all $\zeta\in Z_n$ and \eqref{intGammaSmall}, (\ref{distRGammaSmall}), (\ref{intSmall}) give
\[\int_{\DD\setminus \overline{G_n}}|R_j(z)-\gamma_{1,j}(z)|^2 \, dm_2(z) <3/n\]
for $j=1,...,n$.
For $G:=\bigcup_{n \in \N} G_n$ we finally have $\gamma_\zeta\in A^2(\DD\setminus \overline{G})$ for all $\zeta\in\T\cap \overline{G}$, and for every non negative integer $k$ the function $\gamma_{1,k}$ belongs to the closure of the span of $\{\gamma_{\zeta}: \zeta\in \overline{G} \cap \T\}$.  According to Example \ref{ex} with $1/U=\DD$, the latter implies the denseness of $\spn\{\gamma_\zeta: \zeta \in \overline{G} \cap \T\}$ in $A^2(\DD\setminus \overline{G})$.

\end{proof}
\begin{proof}[Proof of Theorem \ref{main_result}]
Let $\om=\DD \setminus \overline{G}$ where $G$ is as in Theorem \ref{simple poles}. Then $\overline{G} \cap \T \subset \mathcal{E}(T)$. 
According to Example \ref{ex} and Theorem \ref{simple poles}, $T$ is topologically mixing. Since each point in $\T \setminus \overline{G}$ is an interior point of $\overline{\DD}$, no point in  $\T \setminus \overline{G}$ belongs to the point spectrum, that is $\mathcal{E}(T) \subset Z$.
\end{proof}  
 
\begin{remark}  
1. By deleting sufficiently small parts from $G$ it is possible to modify $G$ to an open set $W$ in such a way that $\om=\DD \setminus \overline{W}$ is connected and the Taylor shift $T_{A^2(\om)}$ satisfies the same conditions as in Theorem \ref{main_result}. 

2. The statement (and proof) of Theorem \ref{simple poles} can be modified in such a way that $\om^\ast \cap \T \subset Z$, i.e.\ the spectrum intersects $\T$ only in $Z$ (cf.\ \cite[Question 3]{Gr10}).  
\end{remark}



\end{document}